\newtheorem{theorem}{Theorem}[section]
\newtheorem{corollary}[theorem]{Corollary}
\newtheorem{definition}[theorem]{Definition}
\newtheorem{lemma}[theorem]{Lemma}
\newenvironment{proof}[1][Proof]{\textbf{#1.} }
{\ \rule{0.75em}{0.75em}\smallskip}
\numberwithin{equation}{section}
\title{Finite Element Calculation of Photonic Band Structures for Frequency Dependent Materials}
\author{Wenqiang Xiao \thanks{Beijing Computational Science Research Center, Beijing 100193, China. ({\tt wqxiao@csrc.ac.cn})} \and
Bo Gong \thanks{Beijing Computational Science Research Center, Beijing 100193, China. ({\tt gongbo@csrc.ac.cn})} \and
Jiguang Sun \thanks{Department of
Mathematical Sciences, Michigan Technological University, Houghton, MI 49931, U.S.A. ({\tt  jiguangs@mtu.edu}).}
\and Zhimin Zhang \thanks{Beijing Computational Science Research Center, Beijing 100193, China and Department of Mathematics, Wayne State University, Detroit, MI 48202, USA. ({\tt zmzhang@csrc.ac.cn}).}
}
\begin{document}
\maketitle

\begin{abstract}
We consider the calculation of the band structure of frequency dependent photonic crystals.
The associated eigenvalue problem is nonlinear and
it is challenging to develop effective convergent numerical methods.
In this paper, the band structure problem is formulated as the eigenvalue problem of a holomorphic Fredholm operator function of index zero. 
Lagrange finite elements are used to discretize the operator function.  Then 
the convergence of the eigenvalues is proved using the abstract approximation theory for holomorphic operator functions.
A spectral indicator method is developed to practically compute the eigenvalues. 
Numerical examples are presented to validate the theory and show the effectiveness of the proposed method.
\end{abstract}

\section{Introduction}
Photonic crystals (PCs) are periodic structures composed of dielectric materials and exhibiting band gaps. 
Band gaps contain prohibited frequencies of electromagnetic waves going through such crystals. 
This phenomenon has many important applications in optical communications, filters, lasers,  switches and optical transistors, etc. 
\cite{kuzmiak1997, axmann1999, pendry1996, spence2005, raman2010, sakoda2001-II}. Effective calculations of the band
structures are essential to identify novel optical phenomena and develop new devices.

For 2D PCs, the Maxwell's equations are reduced to scalar wave equations. 
The Floquet-Bloch theory \cite{kuchment1993} transforms the band structure problem on the infinite periodic structure to 
an eigenvalue problem on the unit cell (see Fig.~\ref{domain and k}: left). 
If the permittivity is independent of the frequency, the eigenvalue problem is linear. 
There exist many successful numerical methods such as 
the plane wave expansion method \cite{ho1990, figotin1997, johnson2001, norton2010}, the finite-difference time-domain (FDTD) method \cite{qiu2000}, 
and the finite element method \cite{axmann1999, dobson1999, boffi2006}. 
In contrast, frequency dependent permittivities usually lead to nonlinear eigenvalue problems. 
Various methods have been developed as well, e.g., 
generalizations of the plane wave expansion method \cite{kuzmiak1994, kuzmiak1998, gu2006}, FDTD method \cite{ito2001}, 
the cutting plane method \cite{toader2004}, transfer matrix method \cite{pendry1996},
and path-following algorithm \cite{spence2005}.
Most methods for solving nonlinear eigenvalue problems are based on the Newton's iteration \cite{ruhe1973} 
or extensions of the techniques for linear problems \cite{voss2004, sleijpen1996}. 
However, Newton type methods often need good initial estimates of the eigenvalues/eigenvectors
and linearization techniques require restrictions on the nonlinear permittivities \cite{mackey2006}. 
Moreover, the complexity of the spectrum due to the nonlinearity makes the convergence study much more difficult.

In this paper, considering the permittivity as a general nonlinear function of the frequency, we propose a new finite element approach for the band structure calculation 
of frequency dependent materials.
Firstly, the band structure is formulated as the eigenvalue problem of a holomorphic Fredholm operator function. 
Secondly, Lagrange finite elements are used for discretization and the convergence is proved using the abstract approximation theory for the eigenvalue problem
of the holomorphic operator function \cite{karma1996I, karma1996II}.
Thirdly, a spectral indicator method (SIM) is developed to practically compute the eigenvalues.
This version of SIM is similar to that in \cite{gong2020} for the nonlinear transmission eigenvalues (see also \cite{xiao2020} for Dirichlet eigenvalues). 
It extends the idea in \cite{huang2016, huang2018} for the generalized eigenvalues of non-Hermitian matrices and is particularly 
effective to compute eigenvalues of a holomorphic operator function. 

The rest of the paper is organized as follows. In Section 2, some preliminaries of holomorphic Fredholm operator functions and the associated abstract approximation theory are presented. 
In Section 3, we introduce the mathematical model of the band structures for 2D photonic crystals and formulate the problem as the eigenvalue problem of a holomorphic Fredholm operator function of index zero.
In Section 4, Lagrange finite elements are used for discretization and the convergence is proved using the abstract approximation theory of Karma \cite{karma1996I, karma1996II}. 
Section 5 contains a spectral indicator method to compute 
the eigenvalues in a region on the complex plane. Numerical examples are shown in Section 6. Finally, we draw some conclusions and discuss future work in Section 7.

\section{Preliminaries}
We present some preliminaries on holomorphic Fredholm operator functions and the abstract approximation 
theory for the associated eigenvalue problems (see, e.g., \cite{gohberg2009, karma1996I, karma1996II, beyn2014}).
Let $X,Y$ be complex Banach spaces and $\Omega \subset\mathbb{C}$ be compact and simply connected.
Denote by $\mathcal{L}(X,Y)$ the space of bounded linear operators from $X$ to $Y$.
\begin{definition}
An operator $A \in \mathcal{L}(X, Y)$ is said to be Fredholm if
\begin{itemize}
\item[1.] the range of $A$, denoted by $\mathcal{R}(A)$, is closed in $Y$;
\item[2.] the null space of $A$, denoted by $\mathcal{N}(A)$, and the quotient space $Y/\mathcal{R}(A)$ are finite-dimensional.
\end{itemize}
The index of $A$ is the integer defined by
\[
\text{ind}(A) = \dim \mathcal{N}(A) - \dim (Y/\mathcal{R}(A)).
\]
\end{definition}
\begin{definition}
Let $E$ be a Banach space and $\Omega \subset \mathbb C$ be an open set. A function $f: \Omega \to E$ is called holomorphic if, for each $w \in \Omega$,
\[
f'(w) := \lim_{z \to w} \frac{f(z)-f(w)}{z-w}
\]
exists.
\end{definition}

Let $T: \Omega \to \mathcal{L}(X, Y)$ be a holomorphic operator function on $\Omega$.
Denote by $\Phi_0(\Omega,\mathcal{L}(X,Y))$ the set of holomorphic Fredholm operator functions of index zero \cite{gohberg2009}.
Assume that $T \in \Phi_0(\Omega,\mathcal{L}(X,Y))$, i.e., for each $\omega \in \Omega$, $T(\omega) \in  \mathcal{L}(X, Y)$ is a Fredholm operator of index zero. 
The eigenvalue problem is to find $(\omega,u)\in \Omega \times X,~u\neq 0$, such that
\begin{equation}\label{nonlinear eig}
T(\omega)u=0.
\end{equation}
The resolvent set $\rho(T)$ and the spectrum $\sigma(T)$ of $T$ with respect to $\Omega$ are, respectively, defined as
\[
\rho(T)=\{\omega\in \Omega: T(\omega)^{-1} \in \mathcal{L}(Y,X) \}\quad \text{and} \quad \sigma(T)=\Omega \backslash\rho(T).
\]
Throughout the paper, we assume that $\rho(T)\neq\emptyset$. Then the spectrum $\sigma(T)$ has no cluster points in $\Omega$
and every $\omega \in \sigma(T)$ is an eigenvalue \cite{karma1996I}.

To approximate the eigenvalues of $T$, we consider operator functions 
$T_n\in \Phi_0(\Omega,\mathcal{L}(X_n,Y_n))$, $n\in\mathbb{N}$, such that the following properties hold \cite{karma1996I, beyn2014}.
\begin{itemize}
\item[(A1)] There exist Banach spaces $X_n,Y_n$, $n\in\mathbb{N}$, and linear bounded mappings $p_n\in\mathcal{L}(X,X_n)$, $q_n\in\mathcal{L}(Y,Y_n)$ such that
\begin{equation*}\label{pnqn}
\lim\limits_{n \rightarrow\infty}\|p_n v\|_{X_n}=\|v\|_X,\, v\in X,\quad
\lim\limits_{n \rightarrow\infty}\|q_n v\|_{Y_n}=\|v\|_Y,\, v\in Y.
\end{equation*}
\item[(A2)] The sequence $\{T_n(\cdot)\}_{n \in \mathbb{N}}$ satisfies
\[
\|T_n(\omega)\| \le \infty \quad \text{for all } \omega \in \Omega, n \in \mathbb{N}.
\]
\item[(A3)] $\{T_n(\omega)\}_{n \in \mathbb{N}}$ approximates $T(\omega)$ for every $\omega\in \Omega$, i.e., 
\[
\lim_{n \to \infty} \|T_n(\omega)p_n x - q_n T(\omega) x\|_{Y_n} = 0 \quad \text{for all } x \in X. 
\]
\item[(A4)] Let $\omega \in \Omega$ and $\{x_n\} \subset X_n, n\in  \mathbb{N}$ be 
bounded. For any subsequence  $\{x_n\}$, $n\in N \subset \mathbb N$,
\[
\lim_{\mathbb{N} \ni n\rightarrow \infty} \|T_n(\omega) x_n - q_n y\|_{Y_n}=0
\]
for some $y\in Y$, there exists a subsequence $N' \subset N$
and an element $x\in X$ such that 
\[
\lim\limits_{N' \ni n\rightarrow \infty} \|x_n-p_n x\|_{X_n}=0.
\]
\end{itemize}

If the above conditions are satisfied, one has the following abstract approximation result (see Section 2 of \cite{karma1996II} or Theorem 2.10 of \cite{beyn2014}).

\begin{theorem}\label{theorem1}
Assume that {\rm (A1)-(A4)} hold. For any $\omega\in\sigma(T)$ there exists $n_0\in \mathbb{N}$
and a sequence $\omega_n\in \sigma(T_n), n\ge n_0$, such that $\omega_n\to \omega$ as 
$n\to \infty$. Furthermore, letting $G(\omega)$ be the generalized eigenspace of $\omega$ and
$r_0$ be the maximum rank of eigenvectors associated to $\omega$, it holds that
\[
|\omega_n-\omega|\leq C \varepsilon_n ^{1/{r_0}},
\]
where 
\[
\varepsilon_n=\max\limits_{|\eta-\omega|\leq \delta} \max\limits_{v\in G(\omega)}\|T_n(\eta) p_n v-q_n T(\eta)v\|_{Y_n},
\]
for sufficiently small $\delta> 0$. 
\end{theorem}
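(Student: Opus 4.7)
The plan is to reduce the approximation statement to two ingredients: (i) a regular (stable and consistent) approximation property of the operator family $T_n$ to $T$ that forces the spectra to converge in the Hausdorff sense on compact subsets of $\Omega$, and (ii) a local factorization of $T$ near $\omega$ that explains the exponent $1/r_0$. Throughout I would treat the existence of $\omega_n\in\sigma(T_n)$ and the quantitative rate separately, as the first is essentially topological while the second is algebraic.

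For existence, I would first observe that (A1)--(A4) yield what Karma calls \emph{regular convergence}: if $\{x_n\}\subset X_n$ is bounded and $T_n(\omega)x_n$ converges discretely to some $q_n y$, then $\{x_n\}$ itself has a subsequence converging discretely to some $x\in X$ with $T(\omega)x=y$. Combined with consistency (A3), this is exactly the hypothesis needed to show that $T_n(\omega)^{-1}$ exists and is uniformly bounded on compact sets $K\subset\rho(T)$ for all sufficiently large $n$. The standard argument is by contradiction: if $\|T_n(\omega_n)^{-1}\|\to\infty$ for some $\omega_n\to\omega_*\in K$, pick unit $x_n$ with $T_n(\omega_n)x_n\to 0$, extract via (A4) a limit $x\in X$, and conclude $T(\omega_*)x=0$, contradicting $\omega_*\in\rho(T)$. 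With this in hand, apply the operator-valued residue (Keldysh) formula and consider the contour integral
\[
P_n = \frac{1}{2\pi i}\oint_{|\eta-\omega|=\delta} T_n(\eta)^{-1}\,T_n'(\eta)\,d\eta,
\]
and its continuous analogue $P$. Consistency plus uniform invertibility on the circle imply $P_n\to P$ in the discrete sense; since $\dim\mathcal{R}(P)\ge 1$ (as $\omega\in\sigma(T)$), the same holds for $P_n$ for $n$ large, producing the required $\omega_n\in\sigma(T_n)$ with $\omega_n\to\omega$.

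For the rate, I would invoke the Smith-type local factorization of a holomorphic Fredholm operator function of index zero at an isolated eigenvalue: there exist holomorphic operator functions $E(\eta),F(\eta)$, invertible near $\omega$, and nonnegative integers $\kappa_1\le\cdots\le\kappa_m=r_0$ (the partial multiplicities) such that
\[
T(\eta) = E(\eta)\,\operatorname{diag}\bigl((\eta-\omega)^{\kappa_1},\ldots,(\eta-\omega)^{\kappa_m},I_\infty\bigr)\,F(\eta)
\]
in a suitable basis adapted to $G(\omega)$. Plugging $\eta=\omega_n$ into $T_n(\omega_n)v_n=0$, lifting through $p_n,q_n$, and subtracting $q_n T(\omega_n)v$ for a representative $v\in G(\omega)$, one gets on the generalized eigenspace an equation of the form $(\omega_n-\omega)^{r_0}\,\alpha_n = O(\varepsilon_n)$ with $\alpha_n$ bounded away from zero in the limit. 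Taking $r_0$-th roots yields $|\omega_n-\omega|\le C\varepsilon_n^{1/r_0}$.

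The main obstacle, in my view, is not the algebraic part but verifying that the discrete contour integrals converge and that the local factorization survives the passage through the discretization maps $p_n,q_n$ in a norm-compatible way. The clean way around this is to work entirely in terms of the discrete convergence notion built from $p_n,q_n$ (rather than a common ambient space) and to use (A4) to upgrade weak/bounded statements to strong discrete convergence at each step; once that framework is set up, both the Riesz-projection count and the Puiseux-type expansion go through essentially as in the finite-dimensional holomorphic matrix case.
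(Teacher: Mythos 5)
The paper does not actually prove this theorem: it is imported wholesale from the literature (``see Section 2 of \cite{karma1996II} or Theorem 2.10 of \cite{beyn2014}''), so there is no in-paper argument to compare yours against line by line. What you have written is a reasonable reconstruction of how those references establish it: conditions (A1)--(A4) are precisely Karma's notion of discrete/regular approximation, the contradiction argument for uniform boundedness of $T_n(\eta)^{-1}$ on compact subsets of $\rho(T)$ is standard, and the combination of a contour-integral detection of eigenvalues with the Gohberg--Sigal local Smith factorization is exactly the Beyn--Latushkin--Rottmann-Matthes route (Karma's original proof instead reduces to a finite-dimensional holomorphic matrix function and applies a Rouch\'e-type argument, but the two are close in spirit). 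So as an outline your plan is sound and consistent with the cited sources.

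Two points in your sketch deserve more care. First, $P_n=\frac{1}{2\pi i}\oint T_n(\eta)^{-1}T_n'(\eta)\,d\eta$ is \emph{not} a projection for a genuinely nonlinear $T_n$; it is a finite-rank operator whose \emph{trace} equals the number of eigenvalues inside the contour counted with algebraic multiplicity. The detection argument still works (if $T_n$ had no eigenvalue inside, the integrand would be holomorphic and $P_n$ would vanish, so non-vanishing of the discrete limit forces an eigenvalue), but you should phrase it through the trace or through non-vanishing rather than through ``$\dim\mathcal{R}(P_n)\ge 1$ because $P_n$ is a projection close to $P$.'' Second, the step ``$(\omega_n-\omega)^{r_0}\alpha_n=O(\varepsilon_n)$ with $\alpha_n$ bounded away from zero'' hides the reason the exponent is $r_0=\max_i\kappa_i$ rather than the algebraic multiplicity $\sum_i\kappa_i$: a determinant-based perturbation argument only yields the weaker exponent $1/\sum_i\kappa_i$. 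One must work with the normalized discrete eigenvector transported into the Smith coordinates, apply a pigeonhole argument to find a component $v_{n,i}$ bounded below, and conclude $|\omega_n-\omega|^{\kappa_i}\le C\varepsilon_n$ for that particular $i$; the claimed bound then follows because $\varepsilon_n^{1/\kappa_i}\le\varepsilon_n^{1/r_0}$ for $\varepsilon_n\le 1$. Relatedly, the consistency error $\varepsilon_n$ is measured only on $G(\omega)$, so justifying that the discrete eigenvector can be compared to elements of $G(\omega)$ (via (A4)) is where the remaining technical work lies, as you correctly anticipate in your closing paragraph.
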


\section{The Mathematical Model}
Photonic band structures are posted as the eigenvalue problems of the Maxwell's equations \cite{axmann1999, raman2010}
\begin{equation}\label{maxwell}
\left\{
\begin{aligned}
& \nabla\times\frac{1}{\epsilon(x,\omega)}\nabla\times\mathbf{H}=\left(\frac{\omega}{c}\right)^2\mathbf{H},\\
& \nabla\cdot\mathbf{H}=0,
\end{aligned}
\right.
\end{equation}
where $\mathbf{H}$ is the magnetic field, $\omega$ is the frequency, $\epsilon(x,\omega)$ is the electric permittivity, and $c$ is the speed of light in the vacuum.
In 2D, the Maxwell's equations \eqref{maxwell} can be reduced to the transverse electric (TE) case or the transverse magnetic (TM) case.
In this paper, we consider the TE case 
\begin{equation}\label{TE}
-\Delta\psi=\left(\frac{\omega}{c}\right)^2\epsilon(x,\omega)\psi.
\end{equation}

Assume that the photonic crystal has the unit periodicity on a square lattice. Letting
$\mathbb{Z} =\{0,\pm 1,\pm 2,\cdots\}$ and defining the lattice $\Lambda=\mathbb{Z}^2$, one has that
\begin{equation*}
\epsilon(x+n,\omega)=\epsilon(x,\omega)~~~{\rm for~all} ~x\in\mathbb{R}^2 \text{ and } n\in\Lambda.
\end{equation*}
The periodic domain $D:=\mathbb{R}^2/\mathbb{Z}^2$
can be identified with the unit square $D_0:=(0, 1)^2$ by imposing periodic boundary conditions (Fig.~\ref{domain and k}: left).
Introduce the so-called quasimomentum vector ${\boldsymbol k}\in \mathcal{K}$ (Fig.~\ref{domain and k}: right), where
$$\mathcal{K}=\{{\boldsymbol k}\in \mathbb{R}^2|-\pi\leq k_j\leq\pi,j=1,2\}$$
is called the Brillouin zone.
Using the Floquet transform \cite{kuchment1993}, \eqref{TE} 
can be written as an eigenvalue problem parametrized by ${\boldsymbol k}$ on the primitive cell $D_0$ \cite{axmann1999}
\begin{equation}\label{TE-equation}
 -(\nabla+i {\boldsymbol k})\cdot(\nabla+i{\boldsymbol k})u(x)=\left(\frac{\omega}{c}\right)^2\epsilon(x,\omega)u(x) \quad {\rm in}~D_0,
 \end{equation}
where $u$ is the Floquet transform of $\psi$, a periodic function in both $x_1$ and $x_2$.
 
We assume that, for almost all $x\in \mathbb{R}^2$, $\epsilon(x,\omega)$ is holomorphic on $\Omega$ and satisfies
\begin{equation}\label{boundness}
0<C_0\le |\epsilon(x,\omega)|\le C_1<\infty,~x\in \mathbb{R}^2, \omega\in \Omega.
\end{equation}
 
Let $\Gamma_i , i=1,\cdots,4,$ be the four parts of $\partial D_0$ (Fig.~\ref{domain and k}: left).  The eigenvalue problem in the TE case is to find $\omega$ and $u$ such that, for a fixed ${\boldsymbol k}\in\mathcal{K}$,
\begin{equation}\label{TE pde}
\left\{
\begin{aligned}
&-(\nabla+i{\boldsymbol k})\cdot(\nabla+i{\boldsymbol k})u(x)=\left(\frac{\omega}{c}\right)^2\epsilon(x,\omega)u(x),~x\in D_0, \\
&u|_{\Gamma_1}=u|_{\Gamma_2},~~u_{x_1}|_{\Gamma_1}=u_{x_1}|_{\Gamma_2},\\
&u|_{\Gamma_3}=u|_{\Gamma_4},~~u_{x_2}|_{\Gamma_3}=u_{x_2}|_{\Gamma_4}.
\end{aligned}
\right.
\end{equation}

\begin{figure}[h!]
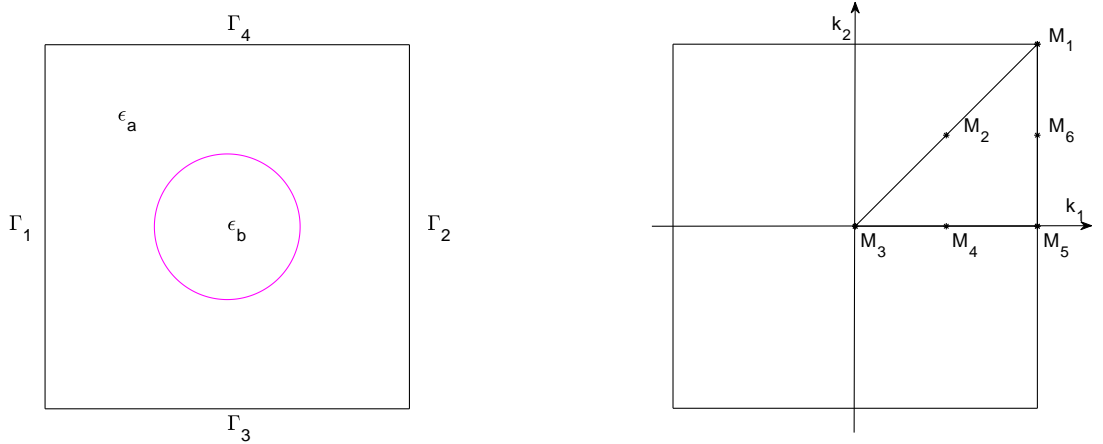

\begin{center}
\begin{tabular}{lll}
\resizebox{0.48\textwidth}{!}{\includegraphics{domain.eps}}&
\resizebox{0.48\textwidth}{!}{\includegraphics{vectorK.eps}}
\end{tabular}
\end{center}
\caption{Left: a square cell $D_0$ with a disc of radius $r$ at its center.  Right: Brillouin zone $\mathcal{K}$: $M_1=(\pi,\pi), M_2=(\frac{\pi}{2},\frac{\pi}{2}), M_3=(0,0), M_4=(\frac{\pi}{2},0), M_5=(\pi,0), M_6=(\pi,\frac{\pi}{2})$.}
\label{domain and k}
\end{figure} 


We first formulate \eqref{TE pde} 
as the eigenvalue problem of a holomorphic Fredholm operator function of index zero (see \cite{engstrom2010}).
Let $H^1(D_0)$ be the Sobolev space of functions in $L^2(D_0)$ with square integrable gradients.
Denote by $(\cdot,\cdot)_1$ and $\|\cdot\|_1$ the inner product and norm on $H^1(D_0)$, respectively. 
Define the subspace of functions in $H^1(D_0)$ with periodic boundary conditions by
\begin{equation}
H_p^1(D_0)=\{v\in H^1(D_0)|~v|_{\Gamma_1}=v|_{\Gamma_2},v|_{\Gamma_3}=v|_{\Gamma_4}\}.
\end{equation}
Multiplying \eqref {TE-equation} by a test function $v\in H_p^1(D_0)$ and integrating by parts, the weak formulation is to find 
$(\omega, u) \in \Omega \times H_p^1(D_0)$ such that
\begin{equation}\label{TE weak form}
\int_{D_0}(\nabla+i{\boldsymbol k})u\cdot\overline{(\nabla+i{\boldsymbol k})v}dx=\left(\frac{\omega}{c}\right)^2\int_{D_0} \epsilon(x,\omega)u\overline{v}dx \quad \text{for all } v\in H_p^1(D_0).
\end{equation}
If $\epsilon(x, \omega)$ depends on $\omega$,  \eqref{TE weak form} is a nonlinear eigenvalue problem in general.

Rewrite the sesquilinear form \eqref{TE weak form} as (see \cite{engstrom2010})
\begin{equation}\label{eqn1}
(u,v)_1-b(\omega; u,v)=0,
\end{equation}
where
\begin{equation}\label{bTE}
b(\omega; u,v)=\int_{D_0} \left(\left(\frac{\omega}{c}\right)^2\epsilon+1\right)u\overline{v}-2iu{\boldsymbol k}\cdot\nabla\overline{v}-|{\boldsymbol k}|^2u\overline{v}dx, ~\omega\in \Omega.
\end{equation}

Due to boundedness of $\epsilon$ stated in \eqref{boundness}, $b(\omega; \cdot, \cdot)$ is bounded. 
By the Riesz representation theorem, we define an operator function $B: \Omega \rightarrow \mathcal{L}(H_p^1(D_0),H_p^1(D_0))$ such that
\begin{equation}\label{BTE}
b(\omega; u,v)=(B(\omega)u,v)_1 \quad \text{for all } v\in H_p^1(D_0).
\end{equation}
Using \eqref{eqn1}, \eqref{TE weak form} can be written as
\begin{equation}\label{operator form}
(u,v)_1-(B(\omega)u,v)_1=0 \quad \text{for all } v\in H_p^1(D_0).
\end{equation}

Define an operator function $T: \Omega \rightarrow \mathcal{L}(H_p^1(D_0),H_p^1(D_0))$  by
\begin{equation}\label{TEoperator}
T(\omega):=I -B(\omega),
\end{equation}
where $I$ is the identity operator.
Since \eqref{operator form} holds for all $v\in H^1_p(D_0)$, \eqref{TE weak form} is equivalent to finding $(\omega, u) \in \Omega \times H_p^1(D_0)$ such that
\begin{equation}\label{TEoperatorEig}
T(\omega)u =u-B(\omega)u=0.
\end{equation}
Since $\epsilon(x, \omega)$ is holomorphic in $\omega$,
$T(\omega)$ is a holomorphic operator function \cite{gohberg2009, engstrom2010}.



The following lemma claims that  $B(\omega)$ is a compact operator.
It is proved in \cite{engstrom2010} (Lemma 4.2 therein, see also Theorem 3.3.3 in \cite{SunZhou2016} for a similar result), which uses the Cauchy sequence and the boundedness 
of the sesquilinear form $b$. 
\begin{lemma}
The operator $B(\omega)$ is compact for all $\omega\in \Omega$.
\end{lemma}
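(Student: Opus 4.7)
The plan is to show that $B(\omega)$ maps bounded sets in $H^1_p(D_0)$ to precompact sets, by exploiting the fact that the sesquilinear form $b(\omega;u,v)$ contains only zeroth-order contributions of $u$ (no $\nabla u$ appears), so the argument $u$ lies essentially in $L^2$, where the Rellich-Kondrachov compact embedding $H^1(D_0) \hookrightarrow L^2(D_0)$ can be applied.

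First I would establish the key continuity estimate. Inspecting \eqref{bTE}, each term has $u$ without derivatives paired against either $v$ or $\nabla v$. Using the boundedness of $\epsilon$ from \eqref{boundness} and the Cauchy-Schwarz inequality, I expect to obtain a bound of the form
\[
|b(\omega; u, v)| \le C(\omega, \boldsymbol{k}) \, \|u\|_{L^2(D_0)} \, \|v\|_{H^1(D_0)} \quad \text{for all } u, v \in H^1_p(D_0),
\]
with the constant depending on $|\omega|$, $|\boldsymbol{k}|$ and the upper bound $C_1$ on $\epsilon$. Combined with the Riesz definition \eqref{BTE}, this gives
\[
\|B(\omega) u\|_1 = \sup_{\|v\|_1 = 1} |(B(\omega) u, v)_1| = \sup_{\|v\|_1 = 1} |b(\omega; u, v)| \le C(\omega, \boldsymbol{k}) \, \|u\|_{L^2(D_0)}.
\]
Thus $B(\omega)$ factors through the $L^2$-norm of its argument.

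Next I would close the argument with a standard compactness reasoning. Let $\{u_n\} \subset H^1_p(D_0)$ be a bounded sequence. By the compact embedding $H^1(D_0) \hookrightarrow L^2(D_0)$, there exists a subsequence (not relabelled) that is Cauchy in $L^2(D_0)$. Applying the factored bound above to the differences $u_n - u_m$ yields
\[
\|B(\omega) u_n - B(\omega) u_m\|_1 \le C(\omega, \boldsymbol{k}) \, \|u_n - u_m\|_{L^2(D_0)} \longrightarrow 0 \quad \text{as } n,m \to \infty,
\]
so $\{B(\omega) u_n\}$ is Cauchy in $H^1_p(D_0)$ and therefore convergent. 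This proves compactness.

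The only mild obstacle is bookkeeping the constant in the continuity estimate: one has to verify that the term $-2i u \boldsymbol{k} \cdot \nabla \overline{v}$, which pairs $u$ against $\nabla v$, still yields an estimate in terms of $\|u\|_{L^2}$ rather than $\|u\|_1$, which it does because the derivative sits on $v$, not $u$. Everything else is a direct application of Cauchy-Schwarz, \eqref{boundness}, and Rellich-Kondrachov. Holomorphy of $B$ in $\omega$ is not needed here (it is handled separately by the holomorphy of $\epsilon(x,\omega)$), so the proof is pointwise in $\omega \in \Omega$.
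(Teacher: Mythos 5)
Your proof is correct and follows essentially the same route the paper relies on: the paper defers to Lemma 4.2 of Engstr\"{o}m (and Theorem 3.3.3 of Sun--Zhou), explicitly describing that argument as using a Cauchy sequence together with the boundedness of the form $b$, which is precisely your factorization $\|B(\omega)u\|_1 \le C\|u\|_{L^2(D_0)}$ combined with the Rellich--Kondrachov embedding.
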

\section{FEM Discretization and Convergence}
In this section, we employ Lagrange elements to discretize the operators and prove
the convergence of eigenvalues using the abstract approximation theory by Karma \cite{karma1996I, karma1996II}.
For simplicity, we shall focus on the linear Lagrange element and the results for higher order elements are similar.

Let $\mathcal{T}_h$ be a regular triangular mesh for $D_0$, where $h$ is the mesh size.
Let $V_h\subset H_p^1(D_0)$ be the assocoated linear Lagrange finite element space. 
Let $B^h: \Omega \rightarrow\mathcal{L}(V_h,V_h)$ be given by
\[
(B^h(\omega) u_h,v_h)_1:=b(\omega; u_h,v_h) \quad \text{for all } v_h\in V_h.
\]
Define the discrete operator function $T^h: \Omega \rightarrow \mathcal{L}(V_h,V_h)$ such that
\[
T^h(\omega)=I^h-B^h(\omega),
\]
where $I^h$ is the identity operator from $V_h$ to $V_h$.

Define the linear projection operator
$p_h: H_p^1(D_0)\rightarrow V_h$ such that
\begin{equation}\label{projection}
(u,v_h)_1=(p_h u,v_h)_1~~~\text{for all } v_h\in V_h.
\end{equation}
Clearly, $p_h$ is bounded and $\|p_h u-u\|_1\rightarrow 0$ as $h\to 0$ for $u\in H_p^1(D_0)$ (see, e.g., Sec. 3.2 of \cite{SunZhou2016}).
Furthermore, using the Aubin-Nitsche Lemma (Theorem 3.2.4 of \cite{SunZhou2016}), it holds that
\begin{equation}\label{L2 error}
\|u- p_h u\|_{L^2(D_0)}\leq Ch \|u\|_{H^1(D_0)}.
\end{equation}

\begin{lemma}\label{supsupTh}
There exists $h_0>0$ small enough such that, for every compact set $\Omega \subset \mathbb{C}$,
\begin{equation}\label{bounded}
\sup\limits_{h<h_0}\sup\limits_{\omega\in \Omega}\|T^h(\omega)\|_{\mathcal{L}(V_h,V_h)}<\infty.
\end{equation}
\end{lemma}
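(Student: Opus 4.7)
The plan is to exploit the fact that $T^h(\omega) = I^h - B^h(\omega)$ is the sum of the identity and a discrete operator defined through the bounded sesquilinear form $b(\omega;\cdot,\cdot)$, so everything reduces to showing that $b$ is bounded with a constant that is uniform in $\omega \in \Omega$.

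First I would use the triangle inequality $\|T^h(\omega)\|_{\mathcal{L}(V_h,V_h)} \le \|I^h\|_{\mathcal{L}(V_h,V_h)} + \|B^h(\omega)\|_{\mathcal{L}(V_h,V_h)} = 1 + \|B^h(\omega)\|_{\mathcal{L}(V_h,V_h)}$, so it suffices to control the latter uniformly. By the definition of $B^h(\omega)$ and the fact that $V_h$ inherits its inner product from $H^1_p(D_0)$,
\[
\|B^h(\omega)u_h\|_1 \;=\; \sup_{v_h \in V_h \setminus\{0\}} \frac{|(B^h(\omega)u_h,v_h)_1|}{\|v_h\|_1} \;=\; \sup_{v_h \in V_h \setminus\{0\}} \frac{|b(\omega;u_h,v_h)|}{\|v_h\|_1},
\]
so a uniform continuity bound on $b$ yields a uniform operator norm bound on $B^h(\omega)$ that is independent of $h$.

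Next I would estimate each of the three terms in \eqref{bTE} by Cauchy--Schwarz. The first term gives $|((\omega/c)^2\epsilon+1)u,\overline{v})_{L^2}| \le (|\omega|^2 C_1/c^2 + 1)\|u\|_{L^2}\|v\|_{L^2}$ by using \eqref{boundness}; the second term yields $2|{\boldsymbol k}|\,\|u\|_{L^2}\|\nabla v\|_{L^2}$; and the third gives $|{\boldsymbol k}|^2\|u\|_{L^2}\|v\|_{L^2}$. Each of these is dominated by $C(\omega,{\boldsymbol k})\|u\|_1\|v\|_1$. Since $\Omega$ is compact, $|\omega|$ is bounded by some constant depending only on $\Omega$, and ${\boldsymbol k} \in \mathcal{K}$ is fixed, so
\[
|b(\omega;u,v)| \le C_\Omega \|u\|_1 \|v\|_1 \quad \text{for all } \omega \in \Omega,\; u,v \in H^1_p(D_0),
\]
with $C_\Omega$ independent of both $h$ and $\omega$.

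Combining these two steps gives $\|B^h(\omega)\|_{\mathcal{L}(V_h,V_h)} \le C_\Omega$ and hence $\|T^h(\omega)\|_{\mathcal{L}(V_h,V_h)} \le 1 + C_\Omega$ for all $h > 0$ and all $\omega \in \Omega$, which is the claim (one may take any $h_0>0$). There is no real obstacle here; the only point worth checking carefully is that the constant coming out of the Cauchy--Schwarz estimate is genuinely uniform over the compact set $\Omega$, which is immediate from \eqref{boundness} and the compactness of $\Omega$. The fact that the bound is independent of $h$ is built in because $b$ is defined on the continuous space $H^1_p(D_0)$ and $V_h$ sits inside it isometrically.
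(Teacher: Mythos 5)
Your proposal is correct and follows essentially the same route as the paper's proof: the triangle inequality $\|T^h(\omega)v_h\|_1 \le \|v_h\|_1 + \|B^h(\omega)v_h\|_1$ combined with the uniform boundedness of $B^h(\omega)$ coming from the boundedness of $b$ and the compactness of $\Omega$. You simply make explicit the Cauchy--Schwarz estimates and the uniformity of the constant that the paper leaves implicit.
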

\begin{proof}
Due to the boundedness of $B^h(\omega)$ and compactness of $\Omega$, it holds that
\[
\|T^h(\omega)v_h\|_1=\|v_h-B^h(\omega)v_h\|_1\leq \|v_h\|_1+\|B^h(\omega)v_h\|_1\leq C\|v_h\|_1, \quad v_h \in V_h.
\]
\end{proof}

\begin{lemma}\label{ThphphT}
Let $g\in H^{1}_p(D_0)$ and $\omega\in \Omega$.
Then 
\begin{equation}\label{ThTE}
\|T^h(\omega)p_h g-p_hT(\omega)g\|_{H^1(D_0)} \leq Ch \|g\|_{H^{1}(D_0)}.
\end{equation}
\end{lemma}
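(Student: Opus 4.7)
The plan is to reduce the claim to an estimate on the difference $p_h B(\omega) g - B^h(\omega) p_h g$, then exploit the fact that the sesquilinear form $b(\omega;\cdot,\cdot)$ is non-differentiating in its first argument so that the $L^2$-bound \eqref{L2 error} produces the desired factor of $h$.

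First I would expand the target quantity using the definitions $T(\omega)=I-B(\omega)$ and $T^h(\omega)=I^h-B^h(\omega)$. Since $p_h$ acts as the identity on $V_h$ and $p_h g \in V_h$, the identity pieces cancel and
\begin{equation*}
T^h(\omega)p_h g - p_h T(\omega) g = p_h B(\omega) g - B^h(\omega) p_h g.
\end{equation*}
So it suffices to bound $\|p_h B(\omega) g - B^h(\omega) p_h g\|_1$ by $Ch\|g\|_1$.

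Next I would test against an arbitrary $v_h \in V_h$. Using the defining relations \eqref{projection} for $p_h$, \eqref{BTE} for $B$, and the analogous identity for $B^h$, I get
\begin{equation*}
(p_h B(\omega) g - B^h(\omega) p_h g, v_h)_1 = b(\omega; g, v_h) - b(\omega; p_h g, v_h) = b(\omega; g - p_h g, v_h).
\end{equation*}
Looking at the explicit formula \eqref{bTE} for $b$, the first argument enters only as multiplication by $u$ (never as $\nabla u$), so each of the three integrands can be bounded by $|g - p_h g|$ times either $|v_h|$ or $|\nabla v_h|$. Combined with \eqref{boundness} and the compactness of $\Omega$ to control the coefficient $(\omega/c)^2 \epsilon + 1$ uniformly, this yields
\begin{equation*}
|b(\omega; g - p_h g, v_h)| \le C \|g - p_h g\|_{L^2(D_0)} \|v_h\|_1.
\end{equation*}
Applying the Aubin--Nitsche estimate \eqref{L2 error} bounds the right-hand side by $C h \|g\|_1 \|v_h\|_1$.

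Finally, choosing $v_h = p_h B(\omega) g - B^h(\omega) p_h g \in V_h$ and dividing by $\|v_h\|_1$ gives \eqref{ThTE}. The step I expect to be the real content, rather than the routine bookkeeping, is the observation that the first argument of $b(\omega;\cdot,\cdot)$ is never differentiated; without that structural feature one would only recover an $O(1)$ estimate from $\|g - p_h g\|_1$, not the $O(h)$ gained from the $L^2$-superconvergence of the Ritz-type projector $p_h$.
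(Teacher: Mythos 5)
Your proposal is correct and follows essentially the same route as the paper's proof: cancel the identity parts, use the definition of $p_h$ to reduce to $b(\omega;g,v_h)-b(\omega;p_h g,v_h)$, observe that the first argument of $b$ is never differentiated, and invoke the $L^2$ estimate \eqref{L2 error}. The only cosmetic difference is that you make the final duality step (choosing $v_h$ to be the error itself) explicit, which the paper leaves implicit.
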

\begin{proof}  
By the approximation property of $p_h$ in \eqref{L2 error} and \eqref{boundness}, letting $v_h \in V_h$, we have that
\begin{align*} 
&\left(T^h(\omega)p_h g -p_h T(\omega)g, v_h\right)_1\\
=&(p_h g,v_h)_1-(B^h p_h g,v_h)_1-(p_h g,v_h)_1+(p_h B(\omega)g,v_h)_1\\
=&(B(\omega)g,v_h)_1-(B^h p_h g,v_h)_1\\
=&b(\omega; g,v_h)-b(\omega; p_h g,v_h)\\
=&\left( \left(\frac{\omega^2}{c^2}\epsilon+1\right)(g-p_h g), v_h\right)-\left(2i(g-p_h g){\boldsymbol k}, \nabla v_h\right)-\left(|{\boldsymbol k}|^2(g-p_h g), v_h \right)\\
\leq& C\|g-p_h g\|_{L^2(D_0)}\|v_h\|_{H^1(D_0)}\\
\leq& Ch \|g\|_{H^{1}(D_0)}\|v_h\|_{H^1(D_0)},
\end{align*}
which implies \eqref{ThTE}.
\end{proof}\\
\textbf{Remark.} If $g\in H^1_p(D_0)\cap H^{1+\sigma}(D_0)$ for some $0<\sigma \le 1$, 
then 
\[
\|T^h(\omega)p_h g-p_h T(\omega)g\|_{H^1(D_0)} \leq Ch^{1+\sigma}\|g\|_{H^{1+\sigma}(D_0)}.
\]

Now we are ready to present the main convergence theorem for the eigenvalues.
\begin{theorem}
Let $\omega\in \sigma(T)$ and $h$ be small enough.
Then there exists a sequence $\{\omega_h\in\sigma(T^h)\}$
such that $\omega_h\rightarrow\omega$ as $h\rightarrow 0$ and
\begin{equation}\label{err1}
 |\omega_h-\omega|\leq Ch^{\frac{1}{r_0}}.
\end{equation}
Furthermore, if $G(\omega) \subset H^1_p(D_0)\cap H^{1+\sigma}(D_0)$, $0<\sigma \le 1$, then 
\begin{equation}\label{err2}
 |\omega_h-\omega|\leq Ch^{\frac{1+\sigma}{r_0}}.
\end{equation}
\end{theorem}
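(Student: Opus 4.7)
The plan is to verify the four hypotheses of Theorem 2.1 with $X = Y = H^1_p(D_0)$, $X_n = Y_n = V_h$, and $p_n = q_n = p_h$ (the Ritz projection from \eqref{projection}), and then read off the convergence rate from the consistency estimate in Lemma \ref{ThphphT}.

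Assumptions (A1)--(A3) follow directly from what is already established. For (A1), $\|p_h u\|_1 \to \|u\|_1$ is immediate from the best-approximation property $\|p_h u - u\|_1 \to 0$. Assumption (A2) is precisely Lemma \ref{supsupTh}. Assumption (A3), namely $\|T^h(\omega) p_h x - p_h T(\omega) x\|_1 \to 0$ for each $x \in H^1_p(D_0)$, is an immediate consequence of Lemma \ref{ThphphT}.

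The heart of the argument is the regular approximation property (A4). For any $x_h \in V_h$ and $v_h \in V_h$, the definitions give
\begin{equation*}
(B^h(\omega) x_h, v_h)_1 = b(\omega; x_h, v_h) = (B(\omega) x_h, v_h)_1 = (p_h B(\omega) x_h, v_h)_1,
\end{equation*}
which yields the Galerkin identity $B^h(\omega) x_h = p_h B(\omega) x_h$. Now suppose $\{x_h\} \subset V_h$ is bounded and $\|T^h(\omega) x_h - p_h y\|_1 \to 0$ along a subsequence. Writing
\begin{equation*}
x_h = \bigl(T^h(\omega) x_h - p_h y\bigr) + p_h y + p_h B(\omega) x_h,
\end{equation*}
I would pass to a further subsequence with $x_h \rightharpoonup x$ weakly in $H^1_p(D_0)$. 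Compactness of $B(\omega)$ then gives $B(\omega) x_h \to B(\omega) x$ strongly, and the uniform boundedness plus pointwise convergence of $p_h$ promotes this to $p_h B(\omega) x_h \to B(\omega) x$. Since $p_h y \to y$ and the first term vanishes, $x_h \to y + B(\omega) x$ strongly in $H^1_p(D_0)$. Uniqueness of weak limits forces $x = y + B(\omega) x$, and then $\|x_h - p_h x\|_1 \to 0$ follows from $x_h \to x$ strongly together with $\|p_h x - x\|_1 \to 0$, establishing (A4).

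With (A1)--(A4) in place, Theorem 2.1 yields $\omega_h \in \sigma(T^h)$ with $\omega_h \to \omega$ and $|\omega_h - \omega| \le C \varepsilon_h^{1/r_0}$, where $\varepsilon_h = \max_{|\eta - \omega| \le \delta} \max_{v \in G(\omega)} \|T^h(\eta) p_h v - p_h T(\eta) v\|_1$. Since $G(\omega)$ is finite-dimensional, applying Lemma \ref{ThphphT} on a basis of $G(\omega)$ (uniformly in $\eta$ over the disc by the holomorphy and boundedness of $\epsilon$) gives $\varepsilon_h \le C h$, which delivers \eqref{err1}. Under the additional regularity $G(\omega) \subset H^{1+\sigma}(D_0)$, the remark following Lemma \ref{ThphphT} upgrades the bound to $\varepsilon_h \le C h^{1+\sigma}$, yielding \eqref{err2}. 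I expect (A4) to be the main obstacle: only weak convergence of $\{x_h\}$ is available a priori, and the compactness of $B(\omega)$ combined with the approximation properties of $p_h$ must be used carefully to upgrade this to strong convergence.
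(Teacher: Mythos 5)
Your proposal is correct, and the overall skeleton (verify (A1)--(A4), then feed the consistency estimate of Lemma \ref{ThphphT} into Theorem \ref{theorem1}) matches the paper. The genuine difference is in how you verify (A4). The paper splits into the two cases $\omega\in\rho(T)$ and $\omega\in\sigma(T)$: in the first it inverts $T(\omega)$ and shows the operator-norm convergence $\Vert (B_n(\omega)-B(\omega))|_{V_n}\Vert\to 0$ via the Galerkin orthogonality and compactness of $B(\omega)$; in the second it must additionally invoke the Fredholm structure --- closedness of $\mathcal{R}(T(\omega))$ to conclude $y\in\mathcal{R}(T(\omega))$, the quotient inverse $T(\omega)^{-1}$ on $H^1_p(D_0)/\mathcal{N}(\omega)$, and finite-dimensionality of $\mathcal{N}(\omega)$ to extract a convergent subsequence of $P_{\mathcal{N}(\omega)}v_n$. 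Your argument avoids the case distinction entirely: from the identity $B^h(\omega)x_h=p_hB(\omega)x_h$ you write $x_h=(T^h(\omega)x_h-p_hy)+p_hy+p_hB(\omega)x_h$, extract a weakly convergent subsequence $x_h\rightharpoonup x$, and use compactness of $B(\omega)$ (weak-to-strong) together with the uniform boundedness and pointwise convergence of the orthogonal projections $p_h$ to conclude strong convergence $x_h\to y+B(\omega)x$; uniqueness of the weak limit then gives $T(\omega)x=y$ and hence $\Vert x_h-p_hx\Vert_1\to 0$. This is cleaner and slightly more self-contained --- in particular $y\in\mathcal{R}(T(\omega))$ falls out of the limit rather than being argued from closedness of the range --- at the cost of relying on the Hilbert-space structure (weak compactness of bounded sets and $\Vert p_h\Vert=1$), whereas the paper's two-case argument is the one that generalizes verbatim to the Banach-space setting of Karma's framework. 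Your remark that the constant in Lemma \ref{ThphphT} is uniform over the disc $|\eta-\omega|\le\delta$ is a point the paper glosses over but is easily checked from \eqref{boundness} and the compactness of $\Omega$.
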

\begin{proof}
Let $\{h_n\}$ be a small enough monotonically decreasing sequence of positive numbers such that
$h_n\rightarrow 0$ as $n\rightarrow\infty$. 
Correspondingly, we define sequences of operators $T_n(\omega):=T^{h_n}(\omega)$, 
finite element spaces $V_n:=V_{h_n}$, and the projections $p_n:=p_{h_n}$.
Clearly, (A1) in Section 2 holds with $X = Y = H_p^1(D_0)$, $X_n = Y_n = V_n$, and $q_n = p_n$. 
Conditions (A2) and (A3) hold due to Lemma 4.1 and Lemma 4.2.

Next, we verify (A4). Without loss of generality, assume that $v_n \in V_n$, $n\in \mathbb{N}$  with $\Vert v_n\Vert_1 \le 1$ and
\begin{equation}\label{assumption3}
\lim\limits_{n\rightarrow\infty}\|T_n(\omega)v_n-p_ny\|_1=0,
\end{equation}
for some $y\in H_p^1(D_0)$. For any subsequence $\{x_n\}, n \in N \subset \mathbb N$, we shall show that there exists a subsequence $N' \subset N$
and a $v\in H_p^1(D_0)$ such that
\begin{equation}\label{conclusion}
\lim\limits_{N' \ni n\to \infty} \|v_n - p_n v\|_1=0
\end{equation}
by considering $\omega\in \rho(T)$ and $\omega\in \sigma(T)$ separately.

If $\omega\in \rho(T)$, then $T(\omega)^{-1}$ exists and is bounded. Let $v = T(\omega)^{-1}y$.
It holds that
\begin{align}\label{eqn3}
&v_n - p_n v \nonumber \\
=& T(\omega)^{-1}\big((T(\omega)-T_n(\omega))(v_n - p_nv) + T_n(\omega)v_n - p_n T(\omega) v + p_n T(\omega) v- T_n(\omega) p_n v\big)\nonumber\\
=&T(\omega)^{-1}\big((B_n(\omega)-B(\omega))(v_n - p_nv) + T_n(\omega)v_n - p_n T(\omega) v + p_n T(\omega) v- T_n(\omega) p_n v\big).
\end{align}
By the definitions of $B(\omega)$ and $B_n(\omega)$, for $g_n \in V_n$, we have the following Galerkin orthogonality
\begin{align*}
\big((B_n(\omega) - B(\omega))g_n, v_n\big)_1 = 0 \quad \text{for all } v_n \in V_n.
\end{align*}
Consequently,
\begin{align*}
\big((B_n(\omega) - B(\omega))g_n, (B_n(\omega) - B(\omega))g_n\big)_1 
=& \big((B_n(\omega) - B(\omega))g_n, -B(\omega)g_n + p_n B(\omega)g_n\big)_1 \\
\leqslant& \Vert (B_n(\omega) - B(\omega))g_n\Vert_1 \Vert (I - p_n)B(\omega)g_n\Vert_1,
\end{align*}
which implies that $\Vert (B_n(\omega) - B(\omega))|_{V_n}\Vert \leqslant \Vert (I - p_n)B(\omega)|_{V_n}\Vert \leqslant \Vert (I-p_n) B(\omega)\Vert$.
The last term goes to zero
since $I - p_n$ converges to zero pointwisely and $B(\omega)$ is compact 
(both are viewed as operators from $H^1_p (D_0)$ to itself). Hence $\Vert (B_n(\omega) - B(\omega))|_{V_n}\Vert\rightarrow 0$ 
and, for $n$ large enough, $\Vert T(\omega)^{-1}(B_n(\omega) - B(\omega))|_{V_n}\Vert \leqslant 1/2$.
Using \eqref{assumption3} and Lemma 4.2, one has that
\begin{align*}
\Vert v_n - p_n v\Vert_1 \leqslant C\Vert T_n(\omega) v_n - p_n T(\omega) v\Vert_{1} + C \Vert p_n T(\omega) v - T_n(\omega) p_n v\Vert_{1} \rightarrow 0.
\end{align*}

If $\omega\in \sigma(T)$, let $\mathcal{N}(\omega):=\mathcal{N}(T(\omega))$ be the finite dimensional eigenspace associated with $\omega$ \cite{karma1996I}.
We denote by $P_{\mathcal{N}(\omega)}$ the projection from $H_p^1(D_0)$ to $\mathcal{N}(\omega)$, by $T(\omega)^{-1}$ the inverse of $T(\omega)|_{H_p^1(D_0)/\mathcal{N}(\omega)}$
from $\mathcal{R}(T(\omega))$ to ${H_p^1(D_0)/\mathcal{N}(\omega)}$. 
Similarly, using \eqref{assumption3} and the approximation property of $p_n$, we have that
\begin{align*}
\Vert T(\omega) v_n - y\Vert_1 &\leq \Vert T(\omega)v_n - T_n(\omega)v_n\Vert_1 + \Vert T_n(\omega)v_n - p_n y\Vert_1 + \Vert p_n y-y\Vert_1\nonumber\\
&=\Vert (B_n(\omega)-B(\omega))v_n\Vert_1+\Vert T_n(\omega)v_n - p_n y\Vert_1 + \Vert p_n y-y\Vert_1\rightarrow 0, \quad n \to \infty.
\end{align*}
Because $T(\omega)$ is a Fredholm operator, $\mathcal{R}(T(\omega))$ is closed and thus $y \in \mathcal{R}(T(\omega))$.

Let  $v^{\prime} := T(\omega)^{-1} y$ and $v_n^{\prime}:= (I-p_n P_{\mathcal{N}(\omega)})v_n$. 
Similar to \eqref{eqn3}, as $n\rightarrow \infty$ , we deduce that
\begin{equation*}
v_n^{\prime} - p_n v^{\prime} = T(\omega)^{-1}\big((T(\omega)-T_n(\omega))(v_n^{\prime} - p_nv^{\prime}) + T_n(\omega)v_n^{\prime} - p_n T(\omega) v^{\prime} + p_n T(\omega) v^{\prime}- T_n(\omega) p_n v^{\prime}\big) \rightarrow 0.
\end{equation*}
On the other hand, since $\mathcal{N}(\omega)$ is finite dimensional, there is a subsequence $N' \subset N$ and 
$v^{\prime\prime}\in \mathcal{N}(\omega)$ such that $\Vert P_{\mathcal{N}(\omega)}v_n - v^{\prime\prime}\Vert_1\rightarrow 0$ as $N' \ni n \rightarrow \infty$. 
Therefore, letting $v := v^{\prime} + v^{\prime\prime}$, we have that
\begin{align*}
\Vert v_n - p_n v\Vert_1 \leq \Vert v_n^{\prime} - p_n v^{\prime}\Vert_1 + \Vert p_n P_{\mathcal{N}(\omega)}v_n - p_n v^{\prime\prime}\Vert_1 \rightarrow 0,\quad \text{as}\quad N' \ni n\rightarrow \infty
\end{align*}
and (A4) is verified. 

The quantity $\varepsilon_n$ is the consistency error defined by
\begin{equation}
\varepsilon_n=\max\limits_{|\eta-\omega|\leq \delta} \max\limits_{g\in G(\omega)}\|T_n(\eta) p_n g-q_n T(\eta)g \|_{H^1(D_0)},
\end{equation}
where $\delta>0$ is sufficiently small. 
From the proof of Lemma~\ref{ThphphT}, one has that
$$\varepsilon_n \leq Ch,$$
and \eqref{err1} follows Theorem \ref{theorem1} directly.
Moreover, if $G(\omega) \subset H^1_p(D_0)\cap H^{1+\sigma}(D_0)$, then
$$\varepsilon_n \leq Ch^{1+\sigma},$$
and \eqref{err2} follows.
\end{proof}

\begin{corollary}  For a simple eigenvalue $\omega$ ($r_0=1$), according to \eqref{err1} and \eqref{err2}, 
\begin{equation}\label{simple con order}
|\omega-\omega_h|\leq Ch \quad {\rm or} \quad |\omega-\omega_h|\leq Ch^{1+\sigma},
\end{equation}
respectively.
\end{corollary}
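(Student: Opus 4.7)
The plan is to observe that this corollary is an immediate specialization of the preceding theorem's two estimates \eqref{err1} and \eqref{err2} to the case where the maximum rank of eigenvectors $r_0$ equals $1$. First I would recall the definition: for a simple eigenvalue $\omega \in \sigma(T)$, the generalized eigenspace $G(\omega)$ is one-dimensional and the Jordan chain structure is trivial, so $r_0 = 1$. Substituting this value into the general bound $|\omega_h - \omega| \le C \varepsilon_n^{1/r_0}$ reduces the exponent to $1$, i.e., the convergence is linear in the consistency error $\varepsilon_n$.

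Then I would simply plug $r_0 = 1$ into \eqref{err1}, which under the hypothesis $G(\omega) \subset H^1_p(D_0)$ alone gives $|\omega - \omega_h| \le Ch^{1/1} = Ch$. Under the additional regularity hypothesis $G(\omega) \subset H^1_p(D_0) \cap H^{1+\sigma}(D_0)$ with $0 < \sigma \le 1$, plugging $r_0 = 1$ into \eqref{err2} gives $|\omega - \omega_h| \le Ch^{(1+\sigma)/1} = Ch^{1+\sigma}$. These are precisely the two bounds claimed in \eqref{simple con order}.

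There is no genuine obstacle here: the entire work of proving convergence and identifying the exponent dependence on the algebraic structure of the eigenvalue has already been carried out in the main theorem via the Karma abstract framework and the consistency estimate of Lemma \ref{ThphphT}. The corollary amounts to a clean statement of the most practically important special case, and its proof is a one-line substitution with no additional analysis required.
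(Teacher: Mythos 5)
Your proposal is correct and matches the paper exactly: the corollary carries no separate proof in the paper and is obtained precisely by substituting $r_0=1$ into \eqref{err1} and \eqref{err2}. Nothing further is needed.
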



\section{Spectral Indicator Method}
To obtain the band gap structures, one needs an effective scheme to approximate the eigenvalues of a holomorphic operator function.
To this end, we develop a spectral indicator method (SIM) to compute the eigenvalues of $T^h$ in a bounded simply connected region $\Omega \subset\mathbb{C}$.
We refer the readers to \cite{gong2020, xiao2020} for the applications of SIM to the Dirichlet eigenvalue problem and transmission eigenvalue problem.

Let $\phi_i, i=1,\cdots,N$, be the basis functions for $V_h$.
For a fixed ${\boldsymbol k} \in\mathcal{K}$, let $M^h(\omega),  M^h, A^h, S^h$ be the matrices corresponding
to the terms 
\[
\int_{D_0} \epsilon(\omega) u_h \overline{v}_h dx, ~\int_{D_0} u_h \overline{v}_h dx,\quad \int_{D_0} \nabla u_h \cdot \nabla \overline{v}_h dx, \quad\int_{D_0}  u_h {\boldsymbol k} \cdot \nabla \overline{v}_h dx.
\]
In matrix form, $T^h(\omega)$ is given by
\begin{equation}\label{TE nonlinear eig} 
{\mathbb T}^h(\omega):=A^h+2iS^h+|{\boldsymbol k}|^2 M^h-\left(\frac{\omega}{c}\right)^2 M^h(\omega).
\end{equation}


Without loss of generality, let $\Omega\subset\mathbb{C}$ be a square and $\Theta$ be the circle circumscribing 
$\Omega$ lying in $\rho(\mathbb{T}^h)$. Thus ${\mathbb T}^h(\omega)^{-1}$ exists and is bounded for all $\omega \in\Theta$. 
Define an operator $\Pi$ by
\begin{equation}
\Pi=\frac{1}{2\pi i}\int_\Theta {\mathbb T}^h(\omega)^{-1} d\omega.
\end{equation}
Let $g_h = \sum_{i=1}^N g_i \phi_i \in V_h$ be random and $\vec{g}_h = (g_1, \ldots, g_N)^T$. If ${\mathbb T}^h$ has no eigenvalue in $\Omega$, $\Pi \vec{g}_h=0$. 
On the other hand, if ${\mathbb T}^h$ has at least one eigenvalue in $\Omega$, $\Pi \vec{g}_h\neq 0$ almost surely. 
This fact is used to decide if $\Omega$ contains eigenvalues, which is the key idea of SIM \cite{huang2016, huang2018, Huang2020}.

Let $\vec{x}_h$ be the solution of ${\mathbb T}^h(\omega) \vec{x}_h(\omega)=\vec{g}_h$. Using the trapezoidal rule to 
approximate $\Pi \vec{g}_h$, we define an indicator for $\Omega$ by
\[
I_\Omega := \left|\Pi \vec{g}_h \right| \approx \bigg|\frac{1}{2\pi i}\sum\limits_{j=1}^{m_0}w_j \vec{x}_h(\omega_j)\bigg|,
\]
where $m_0$ is the number of quadrature points and $w_j$ are the weights. 
One computes the indicator $I_\Omega$ and decides if $\Omega$ contains eigenvalue(s) or not. 
In implementation, let $\delta_0>0$ be the indicator threshold. For a square $\Omega^i$ at level $i$ ($\Omega^1 = \Omega$), if $I_{\Omega_i} \le \delta_0$,
$\Omega_i$ is discarded.
If $I_{\Omega_i}>\delta_0$, one can uniformly divide $\Omega^i$ into (four) small squares $\Omega^i_j, j=1,2,3,4,$
and then compute the indicators $ I_{\Omega^i_j}$ using the circumscribing circle $\Theta^i_j$ of $\Omega^i_j$.
The procedure continues until the size of $\Omega^i_j$ are less than the precision $\beta_0$. Consequently, eigenvalues are identified with precision $\beta_0$.

The following algorithm SIM-H (spectral indicator method for holomorphic functions) computes all the eigenvalues of $T^h$ in $\Omega$
(see also \cite{gong2020}).

\begin{itemize}
\item[] {\bf SIM-H:}
\item[-] Given a domain $D_0$, a square $\Omega \subset \mathbb C$, and ${\boldsymbol k} \in \mathcal{K}$.
\item[-] Choose  the precision $\beta_0$, the indicator threshold $\delta_0$.
\item[1.] Generate a triangular mesh for $D_0$ and the matrices $M^h, A^h, S^h$.
\item[2.] Choose a random $\vec{g}_h$.
\item[3.] While $d(\Omega^i) > \beta_0$, where $d(\Omega^i)$ is the diameter of $\Omega^i$'s, do
	\begin{itemize}
	\item For each square $\Omega^i_k$ at current level $i$, evaluate the indicator $I_{\Omega^i_k}$ as follows.
		\begin{itemize}
			\item At each quadrature point $\omega_j$, assemble $M^h(\omega_j)$.
			\item Solve $\vec{x}_h(\omega_j)$ for ${\mathbb T}^h(\omega_j)  \vec{x}_h(\omega_j) = \vec{g}_h$.
			\item Compute
		\[
			I_{\Omega^i_k} :=  \bigg|\frac{1}{2\pi i}\sum\limits_{j=1}^{m_0}w_j \vec{x}_h(\omega_j)\bigg|.
		\]
		\end{itemize}
	\item If $ I_{\Omega^i_k} > \delta_0$, uniformly divide $\Omega^i_k$ into smaller squares. Otherwise, discard $\Omega^i_k$.
	\item Collect all the squares and leave them to the next level $i+1$.
	\end{itemize}
\item[4.] Output the eigenvalues (centers of the small squares).
\end{itemize}

In all numerical examples, we set $\delta_0=0.01$ and $\beta_0=10^{-4}$. We note that $\delta_0$ is usually problem dependent and can be chosen by test and error.


\section{Numerical Examples}
In this section, we present several examples by showing the dispersion relations $\omega({\boldsymbol k})$ with ${\boldsymbol k}$ 
moving along $\overline{M_1M_3}$,  $\overline{M_3M_5}$ and $\overline{M_5M_1}$ (Fig.~\ref{domain and k}: right)
and the convergence orders of the eigenvalues of \eqref{TE weak form}.
Consider an infinite array of identical, infinitely long, parallel cylinders, embedded in vacuum, 
whose intersection with a perpendicular plane forms a simple square lattice with a disc inside (Fig.~\ref{domain and k}: left).
In all the examples, $\epsilon_a=1$ is the permittivity of the vacuum and
$\epsilon_b$ is the permittivity in the disc.
Define the filling fraction $f=\pi r^2/a^2$, where $a=1$ is the side length of the unit cell $D_0$, $r$ is the radius of the disc at the center of $D_0$. 
The relative error of the computed eigenvalues on a series of uniformly refined meshes $\mathcal{T}_{h_i}$ is defined as
\[
\xi_{i+1}=\frac{|\omega_{h_{i}}-\omega_{h_{i+1}}|}{\omega_{h_{i+1}}},
\]
where $h_{i+1}=\frac{1}{2}h_i$. 
Note that $V_h\subset H_p^1(D_0)$ is the linear Lagrange element space.

$\textbf{Example 1}$. 
We first compute an example from \cite{dobson1999} for validation, where $r$=0.378 ($f\approx$ 0.448). The permittivity $\epsilon_b=8.9$ is independent of $\omega$. 
Thus the operator $T(\omega)$ is linear.
We identify $\mathbb C$ with $\mathbb R^2$ and choose $\Omega=[0.2, 9.8] \times [-4.8, 4.8]$. 
Fig.~\ref{example1 result} shows the band structure computed on a mesh $\mathcal{T}_h$ with $h\approx\frac{1}{20}$,
which reveals the existence of band gaps. The figure is the same as Fig.~2. of \cite{dobson1999} .

\begin{figure}[h!]
\begin{center}
\begin{tabular}{c}
\resizebox{0.5\textwidth}{!}{\includegraphics{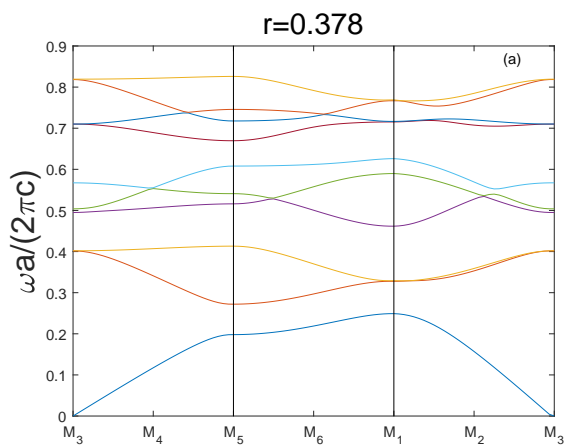}}
\end{tabular}
\end{center}
\caption{Band structure: $\epsilon_b= 8.9, \epsilon_a = 1$, $r=0.378$ $(f\approx 0.448)$.}
\label{example1 result}
\end{figure} 


In Table \ref{E1CO}, we show the relative error and convergence order of the first eigenvalue for ${\boldsymbol k}=(\pi,\pi)$
in \eqref{TE weak form}
which corresponds to $M_1$ in Fig.~\ref{example1 result}.
Second order convergence is obtained.
\begin{table}
\centering
\begin{tabular}{cccc}
\hline
$h$  &$\omega/ (2\pi c)$  &$\xi$  &$order$  \\
\hline\hline
1/10    &0.2539          &-           &-                     \\
1/20    &0.2490           &0.0197    &-                 \\   
1/40    &0.2477          &0.0055   &1.8359        \\
1/80    &0.2473          &0.0014   &1.9496     \\
\hline
\end{tabular}
\caption{Example~1: The first eigenvalue, relative error, and convergence order (${\boldsymbol k}=(\pi,\pi)$).}
\label{E1CO}
\end{table}

$\textbf{Example 2}$. We consider an example from \cite{kuzmiak1997} for a frequency dependent material. 
Let 
\begin{equation}\label{Exepsb}
\epsilon_b(\omega)=\epsilon_\infty\frac{\omega_L^2-\omega^2}{\omega_T^2-\omega^2},
\end{equation}
where $\epsilon_\infty$ is the optical frequency dielectric constant, $\omega_L$ and $\omega_T$ 
are the frequencies of the longitudinal and transverse optical vibration modes of infinite wavelength, respectively.
In \eqref{Exepsb}, $\epsilon_\infty=10.9$, $\omega_T=8.12$THz, $\omega_L=8.75$THz, and $\frac{\omega_Ta}{2\pi c}=1$.
We choose $f=0.001 ~(r=0.0178)$ and $f=0.1 ~(r=0.1784)$. 

\begin{figure}[h!]
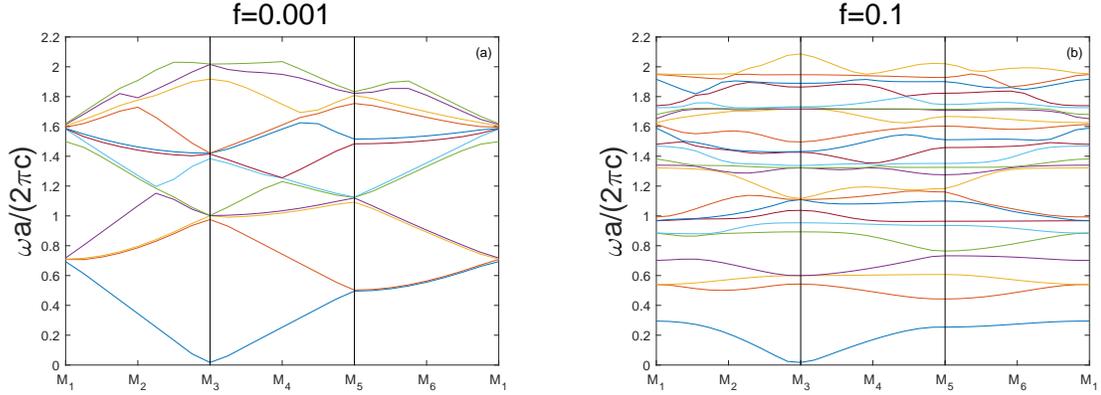

\begin{center}
\begin{tabular}{lll}
\resizebox{0.45\textwidth}{!}{\includegraphics{Epolar_f001.eps}}&
\resizebox{0.45\textwidth}{!}{\includegraphics{Epolar_f1.eps}}
\end{tabular}
\end{center}
\caption{Band structures of square lattice of GaAs cylinders in vacuum.
Left: $f=0.001$. Right: $f=0.1$. The result is consistent with Fig.~1 of \cite{kuzmiak1997}}
\label{GaAs TE}
\end{figure} 



Let $\Omega=[0.1, 13.9] \times[-6.9, 6.9]$.
The band structures shown in Fig.~\ref{GaAs TE}, which is consistent with Fig.~1 in \cite{kuzmiak1997}.
The first eigenvalues for ${\boldsymbol k}=(\pi, \pi)$ and $r=0.1784$ are listed in Table \ref{example2 con order} and 
second order convergence is achieved.
\begin{table}
\centering
\begin{tabular}{cccc}
\hline
$h$  &$\omega/ (2\pi c)$  &$\xi$  &order \\
\hline\hline
1/10    &0.3038          &-           &-                     \\
1/20    &0.2949           &0.0301    &-                 \\   
1/40    &0.2925          &0.0080   &1.9074         \\
1/80    &0.2919          &0.0021   &1.9679     \\
\hline
\end{tabular}
\caption{Example~2: The first eigenvalue, relative error, and convergence order (${\boldsymbol k}=(\pi,\pi)$).}
\label{example2 con order}
\end{table}

$\textbf{Example 3}$.  We consider both lossless and lossy metallic components characterized by a complex frequency dependent dielectric function
\begin{equation}\label{parameter2}
\epsilon_b(\omega)=1-\frac{\omega_p^2}{\omega(\omega+i\gamma)},
\end{equation}
where $\omega_p$ is the plasma frequency, $\gamma$ is an inverse electronic relaxation time \cite{kuzmiak1994}. 

{\it The lossless case}  ($\gamma=0$). 
This case corresponds to a frequency-dependent real-valued dielectric function.
Taking $\frac{\omega_pa}{2\pi c}=1$ and $\Omega=[0.2, 11.8]\times[-5.8, 5.8]$, 
we show the photonic band structures with $f=0.001~(r=0.0178)$ and $f=0.7~(r=0.4720)$ in Fig.~\ref{TE lossless result}. 
When the filling fraction is small, the band structure 
is not significantly different from the dispersion curves for electromagnetic waves in vacuum.
However, for higher value of filling fraction, the band structure differs substantially and reveals the existence of band gaps. 
\begin{figure}[h!]
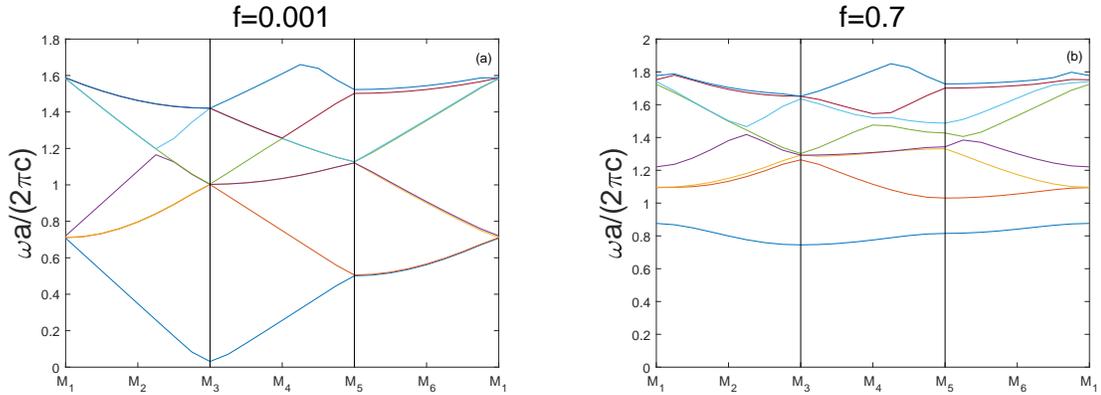

\begin{center}
\begin{tabular}{lll}
\resizebox{0.45\textwidth}{!}{\includegraphics{Epolar_lossless_f001.eps}}&
\resizebox{0.45\textwidth}{!}{\includegraphics{Epolar_lossless_f7.eps}}
\end{tabular}
\end{center}
\caption{Band structures for the square lattice of lossless metal cylinders in vacuum. Left: $f=0.001$. Right: $f=0.7$. The result is consistent with Fig.~1 in \cite{kuzmiak1994}}
\label{TE lossless result}
\end{figure}

This problem was computed previously by the plane wave expansion method in \cite{kuzmiak1994}.
Fig.~\ref{TE lossless result} is consistent with  Fig.~1 of \cite{kuzmiak1994}


{\it The lossy case} ($\gamma\neq0$). This case yields complex eigenvalues. 
We take the real parts of eigenvalues to produce the band structures as \cite{kuzmiak1998}. Let $\gamma=0.01\omega_p$. 
Fig.~\ref{lossy result} shows the band structures when $f=0.01~(r=0.0564)$ and $f=0.1~(r=0.1784)$, 
which is consistent with  Fig.~6 (a) and (b) of \cite{kuzmiak1998}.
Again, we obtain second order convergence for both lossless and lossy media in Table~\ref{E3lossless} and Table~\ref{E3lossy}, respectively.


\begin{figure}[h!]
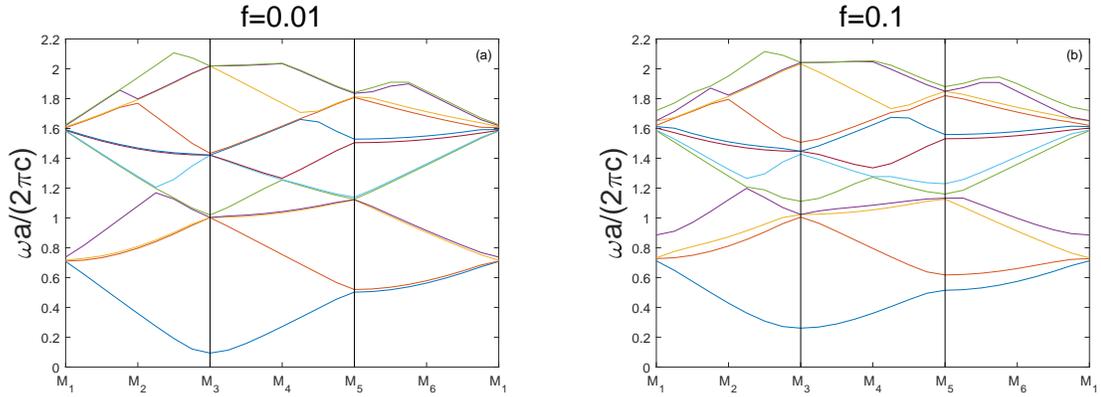

\begin{center}
\begin{tabular}{lll}
\resizebox{0.45\textwidth}{!}{\includegraphics{Epolar_lossy_f01.eps}}&
\resizebox{0.45\textwidth}{!}{\includegraphics{Epolar_lossy_f1.eps}}
\end{tabular}
\end{center}
\caption{Band structures for the square lattice of lossy metal cylinders in vacuum. 
Left: $f=0.01$. Right: $f=0.1$.  The result is the same as Fig.~6 (a) and (b) in \cite{kuzmiak1998}.}
\label{lossy result}
\end{figure}

\begin{table}[h]
\centering
\begin{tabular}{cccc}
\hline
$h$  &$\omega/(2\pi c)$  &$\xi$  & order  \\
\hline\hline
1/10    &0.8878         &-           &-                     \\
1/20    &0.8762           &0.0133   &-                 \\   
1/40    &0.8730          &0.0036   &1.8745         \\
1/80    &0.8722          &9.3047e-04   &1.9589     \\
\hline
\end{tabular}
\caption{Example 3 (lossless case): The first eigenvalues, relative errors, and convergence orders  (${\boldsymbol k}=(\pi,\pi)$, $f=0.7$).}
\label{E3lossless}
\end{table}
\begin{table}[h]
\centering
\begin{tabular}{cccc}
\hline
$h$  &$\omega/c$  &$\xi$  &order  \\
\hline\hline
1/10    &1.6322-0.0220i         &-           &-                     \\
1/20    &1.6384-0.0217i           &0.0038    &-                 \\   
1/40    &1.6398-0.0216i          &8.8922e-04   &2.0984         \\
1/80    &1.6402-0.0216i          &2.1601e-04   &2.0415     \\
\hline
\end{tabular}
\caption{Example 3 (lossy case): The first eigenvalues, relative errors, and convergence orders (${\boldsymbol k}=(0,0)$, $f=0.1$).}
\label{E3lossy}
\end{table}

\section{Conclusion}\label{conclusion}  
We propose a new finite element approach to compute photonic band structures for dispersive materials. 
The problem is first transformed to the eigenvalue problem of a holomorphic Fredholm operator function of index zero.
Lagrange finite elements are employed to discretize the operator functions. Using the abstract approximation theory for 
holomorphic operator functions \cite{karma1996I, karma1996II}, we  prove the convergence of eigenvalues.
Numerical results validate the theory and are consistent with those in literature.

Solving nonlinear eigenvalue problems is a challenging research topic, even for the finite dimensional cases \cite{mehrmann2004}. 
The standard linearization methods \cite{mackey2006} enlarge the matrix dimension, 
thus increasing the computation and storage requirements. 
Methods based on Newton's iteration often need a good initial guess.
However, in general, little is known about the spectral distribution for the nonlinear eigenvalue problem.
SIM is a practical way to compute eigenvalues of (nonlinear) holomorphic operator functions 
and requires no a priori information of the spectrum.

In this paper, we only consider the TE case. The TM case is more delicate and is currently under our investigation.
Another interesting topic is to extend the proposed method to the 3D case.

\section*{Acknowledgement}
The research of B. Gong is supported partially by China Postdoctoral Science Foundation Grant 2019M650460.
The research of Z. Zhang is supported partially by the National Natural Science Foundation of China grants NSFC 11871092, NSAF U1930402 and NSFC 11926356.

\end{document}